\numberwithin{equation}{section}
\theoremstyle{plain}
\newtheorem{thm}{Theorem}[section]
\newtheorem{lem}[thm]{Lemma}
\newtheorem{prp}[thm]{Proposition}
\theoremstyle{definition}
\newtheorem{dfn}[thm]{Definition}
\theoremstyle{remark}
\newtheorem{rem}[thm]{Remark}
\newtheorem{fct}[thm]{Fact}
\newcommand{\Exp}{\text{Exp}}       
\newcommand{\ZZZ}{\mathbb{Z}}
\newcommand{\NNN}{\mathbb{N}}
\begin{document}

\title{Power graphs of Moufang loops}

\author{Riley Britten}

\maketitle

\section{Introduction}

Power graphs of both groups and semigroups have been widely studied, for example in \cite{EPG}, \cite{PG}, \cite{PGII}, \cite{Tor}, \cite{Feng}, \cite{Mog}, \cite{Panda}. While the power graph of a quasigroup can be defined analogously to that of a group, power graphs of quasigroups and loops have thus far been little studied. In this paper we begin transferring results on the power graphs of groups to the context of loops by addressing a question posed by Peter Cameron: if two Moufang loops have isomorphic undirected power graphs, must they have isomorphic directed power graphs? In \cite{PGII} Cameron shows that two groups with isomorphic undirected power graphs must have isomorphic directed power graphs. We are able to extend that result to Moufang loops in our main theorem:

\begin{thm}\label{mainThm}
	Moufang loops with isomorphic undirected power graphs have isomorphic directed power graphs.
\end{thm}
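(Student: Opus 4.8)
The plan is to transfer Cameron's recovery of the directed power graph from the undirected one, proved for finite groups in \cite{PGII}, to the Moufang setting, the Moufang hypothesis entering at just one place. The first observation is that, for a finite Moufang loop $L$, both power graphs see only a single piece of structure: by Moufang's theorem any two elements of $L$ generate a subgroup, so each cyclic subloop $\langle x\rangle$ is a finite cyclic \emph{group}, and there is an arc from $x$ to $y$ in the directed power graph exactly when $\langle y\rangle\subseteq\langle x\rangle$, an edge between $x$ and $y$ in the undirected one exactly when $\langle x\rangle$ and $\langle y\rangle$ are comparable under inclusion. Hence both graphs are determined by the map $x\mapsto\langle x\rangle$, equivalently by the poset of cyclic subloops of $L$ with each node labelled by its order (so also by its number of generators, Euler's $\varphi$ of that order). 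The task thus reduces to recovering this labelled poset, and with it the directed power graph, from the undirected power graph alone; and Cameron's group argument should carry over, because it never refers to $L$ except through this labelled poset, which is the same sort of object whether $L$ is a group or a Moufang loop.

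The adaptation itself would run as follows. One first disposes of the case where the undirected power graph is complete: this says any two elements are $\langle\cdot\rangle$-comparable, and then, taking an element $g$ of largest order and noting that every element is a power of $g$, one gets $L=\langle g\rangle$, a cyclic group of prime-power order, whose directed power graph is then fixed. In the remaining cases one passes to the closed-twin equivalence $x\equiv y$ given by $N[x]=N[y]$, where $N[x]$ is the closed neighbourhood of $x$ in the undirected power graph; this relation is preserved by every isomorphism of undirected power graphs. A lemma analogous to Cameron's --- proved in the same way, as it concerns only the subgroup lattices of the cyclic groups $\langle x\rangle$ --- shows that each $\equiv$-class is a chain of cyclic subloops: usually just the generators of one $\langle x\rangle$, but in certain exceptional cases a union of generator-sets running up a chain, linearly pre-ordered by $\langle\cdot\rangle$-containment. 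One then matches $\equiv$-classes across the two loops by cardinality and, inside each class, reconstructs the arcs from the sizes of the generator-sets composing it; the only coincidence that can occur among those sizes, $\varphi(1)=\varphi(2)$, does not affect the isomorphism type of the reconstructed digraph.

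The remaining step, which I expect to be the main obstacle, is recovering the \emph{directions of arcs between distinct $\equiv$-classes}: if the vertices of two distinct classes are completely joined in the undirected power graph, then for generators $x,y$ of the underlying cyclic subloops $A,B$ one has $A\subsetneq B$ or $B\subsetneq A$ (the case $A=B$ having already been absorbed into $\equiv$), and one must decide which holds from the undirected graph alone. This is genuinely delicate, since in general neither $N[x]$ nor $N[y]$ contains the other, and it is here that one invokes the weighted counting of \cite{PGII}: one compares the subgraphs induced on $N[x]\cap N[y]$, $N[x]\setminus N[y]$ and $N[y]\setminus N[x]$, and exploits the rigidity of the quantities in play --- orders and $\varphi$-values of cyclic groups --- to show that the two orientations cannot produce the same undirected picture. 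Every number in that argument is intrinsic to cyclic groups and their inclusions, so it transfers essentially verbatim; the one thing that genuinely must be checked is that no configuration of cyclic subloops available to a Moufang loop but not to a group can arise to defeat the count --- and this is immediate, since, as noted above, the argument sees $L$ only through its $\varphi$-labelled poset of cyclic subloops, which behaves identically in both settings.
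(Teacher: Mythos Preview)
Your plan is sound, and your key meta-observation is exactly right: both power graphs of a finite Moufang loop depend only on the $\varphi$-labelled poset of cyclic subloops, and since those subloops are genuine cyclic groups (by diassociativity), Cameron's reconstruction machinery cannot distinguish a Moufang loop from a group. That said, your route and the paper's diverge at one structural point.

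The paper keeps Cameron's original case split intact. When a non-identity vertex is joined to every other, the paper does \emph{not} feed this case into the twin-class analysis; instead it proves a new classification theorem (Theorem~\ref{moufangDesc2}): a Moufang $p$-loop with a unique subloop of order $p$ is cyclic, generalized quaternion, or a \emph{generalized octonion loop} $M(Q_{4n},2)$, the last being a genuinely new family. It then distinguishes these three possibilities by the size of the largest clique in the power graph. Only when the identity is the sole vertex joined to all does the paper invoke Cameron's twin-class argument, noting that it needs only power-associativity, the inverse property, and element-wise Lagrange. You, by contrast, fold the quaternion and octonion cases into the uniform twin-class analysis, which works because the offending class $\{1,c\}$ has full closed neighbourhood, so every cross-class direction touching it is settled by containment rather than by the delicate weighted count; and, as you note, the $\varphi(1)=\varphi(2)$ ambiguity inside $\{1,c\}$ is harmless for the isomorphism type.

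What each approach buys: yours is a shorter path to Theorem~\ref{mainThm}, but it forfeits the classification of generalized octonion loops and their several characterisations (Theorem~\ref{genOct}), which occupy most of the paper and are of independent interest. One small point of care for your write-up: since Cameron himself handled the ``multiple vertices joined to all'' case via the group classification rather than via twin classes, your uniform treatment is a mild extension of his argument, not a bare citation of it; the extension is easy, but you should say a sentence about why the $\{1,c\}$ class causes no trouble rather than leave it implicit in ``transfers essentially verbatim''.
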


Cameron's proof in \cite{PGII} relied on handling groups with multiple vertices connected to all others in the power graph separately. Groups with such power graphs are either cyclic or generalized quaternion. We take a similar approach here. In generalizing to Moufang loops, a third type of loop with such a power graph arises; we have termed these \textit{generalized octonion loops}. In {\S}3 we will investigate Moufang loops with power graphs having multiple vertices connected to all others. This will yield the following generalization of a result from group theory:

\begin{thm}\label{moufangDesc2}
	A Moufang $p$-loop $M$ with a unique subloop of order $p$ is either a cyclic group, a generalized quaternion group, or a generalized octonion loop. These last two only occur when $p = 2$.
\end{thm}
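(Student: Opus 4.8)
The plan is to follow the skeleton of the classical proof that a finite $p$-group with a unique subgroup of order $p$ is cyclic or generalized quaternion, using Moufang's theorem to push the argument into two-generated subloops (which, being diassociative, are groups) and then bootstrapping through maximal subloops. First I would reduce to a statement about two-generated subloops: if $H \le M$ is two-generated, then $H$ is a group by Moufang's theorem and a finite $p$-group by Lagrange's theorem for Moufang loops; if $H \ne \{1\}$ it contains a subgroup of order $p$, which is forced to be \emph{the} unique subloop of $M$ of order $p$. Hence every two-generated subloop of $M$ is a finite $p$-group with a unique subgroup of order $p$, so by the classical result it is cyclic or, only when $p = 2$, generalized quaternion.

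This already settles the easy cases. If every two-generated subloop of $M$ is cyclic, pick $a \in M$ of maximal order; for any $b$, the subloop $\langle a, b\rangle$ is cyclic and contains $\langle a\rangle$, so maximality of the order of $a$ forces $\langle a, b\rangle = \langle a\rangle$ and hence $b \in \langle a\rangle$, so $M = \langle a\rangle$ is cyclic. This disposes of every case in which $p$ is odd (where all two-generated subloops are cyclic) and of $p = 2$ when $M$ has no generalized quaternion subloop. If instead $M$ is associative, it is a finite $2$-group with a unique subgroup of order $2$, hence cyclic or generalized quaternion. So the substantive case is: $p = 2$, $M$ nonassociative (hence of order at least $16$), and some two-generated subloop of $M$ is generalized quaternion; the goal is to prove $M$ is a generalized octonion loop.

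I would prove this by induction on $|M|$. Because finite Moufang $2$-loops are nilpotent, a maximal subloop $G < M$ is normal of index $2$, and all of its two-generated subloops are again cyclic or generalized quaternion, so by the inductive hypothesis $G$ is cyclic, generalized quaternion, or a generalized octonion loop. If $G = \langle a\rangle$ were cyclic, then for $t \in M \setminus G$ the subloop $\langle a, t\rangle$ would contain $G$ and an element outside $G$, hence would equal $M$ by maximality of $G$; but $\langle a, t\rangle$ is a group, contradicting nonassociativity. So $G$ is generalized quaternion or generalized octonion, and it remains to analyze the extension $1 \to G \to M \to C_2 \to 1$. Here I would first show $M$ has a unique involution $z$ lying in the center: it commutes with every element because in each group $\langle z, b\rangle$ it is the central unique involution, and a short Moufang-identity computation then places $z$ in the nucleus. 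Thus $z \in G \cap Z(M)$, every $t \notin G$ has $t^{2^{k-1}} = z$ where $2^k = |\langle t\rangle|$, and the condition that $\langle g, t\rangle$ is cyclic or generalized quaternion for each $g \in G$ pins down the conjugation action of $t$ on $G$ and the value of $t^2$; the Moufang identities then determine the multiplication of $M$ and identify it with the generalized octonion loop of order $2|G|$ (invoking the structural properties of generalized octonion loops established in {\S}3).

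The main obstacle is precisely this extension analysis. In the classical setting the analogous step yields exactly $C_{2^{n+1}}$ and $Q_{2^{n+1}}$; here the nonassociativity of $M$ must be created by the extension itself, and one must check that the only way this is compatible with having a unique involution is the generalized octonion construction — in particular, the subcase in which the maximal subloop $G$ is already a generalized octonion loop has to be shown to produce the next generalized octonion loop rather than a genuinely new family, which is where the loop-theoretic content and the bulk of the computation lie.
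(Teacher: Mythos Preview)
Your reduction to the case $p=2$, $M$ nonassociative, is fine and essentially matches the paper. The divergence is in how the index-$2$ extension is controlled, and this is where your proposal has a genuine gap.

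The paper does \emph{not} run the induction with the bare conclusion as hypothesis. Instead it proves the sharper statement that a nonassociative Moufang $2$-loop $M$ of order $2^n$ with a unique involution contains an element of order $2^{n-2}$ (Lemma~\ref{index2subloop}). The argument takes $x_0$ of order $2^{n-3}$ from an index-$2$ subloop, shows that if $\langle x_0\rangle$ were a maximal cyclic subloop it would be normal with $M/\langle x_0\rangle\cong\ZZZ_2^3$, and then a short coset computation with three independent cosets produces a contradiction. The payoff is that the cyclic subgroup $\langle x_0\rangle$ has index~$4$, so any index-$2$ subloop containing it is two-generated and hence \emph{associative}. That is exactly what makes Chein's structure theorem (Theorem~\ref{thm-chein}) applicable and pins $M$ down as $M(Q_{4n},2)$.

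Your induction, by contrast, allows the maximal subloop $G$ to be generalized octonion, and this case really occurs: $O_{16}=M(Q_8,2)$ sits inside $O_{32}=M(Q_{16},2)$ as an index-$2$ subloop. You acknowledge this is ``where the loop-theoretic content and the bulk of the computation lie,'' but the plan you give --- the Moufang identities ``determine the multiplication'' --- is not a proof. Chein's theorem needs $G$ to be a group (the formula is written in terms of an honest conjugation automorphism $\theta(g)=u^{-1}gu$ and a nucleus condition on $u^2$), so it does not apply when $G$ is nonassociative, and you have not supplied a substitute. Either you must carry out that nonassociative extension analysis in full, or --- more efficiently --- strengthen the inductive hypothesis to force $G$ to be a group, which is precisely what the paper's element-of-order-$2^{n-2}$ lemma accomplishes.
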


In {\S}4 we will investigate the structure of generalized octonion loops, yielding the following characterization:

\begin{thm}\label{genOct}
  A finite Moufang loop is generalized octonion if and only if it is a non-associative Moufang loop
    having order a power of $p$ for some prime $p$ and a unique element of order $p$.
\end{thm}

In {\S}5 we will use the above results to prove Theorem \ref{mainThm}. In {\S}6 we will present miscellaneous results on the power graphs of Moufang loops that arose during our attempts to answer the motivating question.


\section{Preliminaries}

\subsection{Moufang loops}

\begin{dfn}
  A \emph{quasigroup} $(Q, \cdot)$ is a magma whose multiplication table is a Latin square.
\end{dfn}

\begin{dfn}
  A \emph{loop} $(Q, \cdot)$ is a quasigroup with an identity element $1\in Q$ such that
  \[x \cdot 1 = 1\cdot x = x\]
  for all $x\in Q$.
\end{dfn}

Basic references for loop theory are \cite{Bel}, \cite{Bruck}, \cite{Pf}. We will use the convention that juxtaposition binds more tightly than $\cdot$. Any uncited facts in the discussion that follows can be found in these references.

\begin{dfn}
  A loop $(Q, \cdot)$ has the \emph{left inverse property} (LIP) if there exists a bijection $\lambda: Q\to Q$ such that for all $x, y\in Q$
  \[\lambda(x)\cdot xy = y\]
\end{dfn}

\begin{dfn}
  Similarly, a loop $(Q, \cdot)$ has the \emph{right inverse property} (RIP) if there exists a bijection $\gamma: Q\to Q$ such that for all $x, y\in Q$
  \[xy\cdot \gamma(y) = x\]
\end{dfn}

\begin{dfn}
  A loop $(Q, \cdot)$ which has both the left and right inverse properties is said to be an \emph{inverse property loop} (IP loop).
\end{dfn}

\begin{dfn}
  A loop $(Q, \cdot)$ is \emph{power-associative} if $\langle x\rangle$ is a group for all $x\in Q$.
\end{dfn}

\begin{dfn}
  A loop $(Q, \cdot)$ is \emph{diassociative} if $\langle x, y\rangle$ is a group for all $x, y\in Q$.
\end{dfn}

\begin{dfn}
    The \textit{exponent} of a power-associative loop $L$, denoted $\Exp(L)$, is the least common multiple of orders of elements of $L$ if it exists and $0$ otherwise.
\end{dfn}

\begin{dfn}
  A \emph{Moufang loop} is a loop satisfying any (and hence all) of the Moufang identities:
  \begin{align*}
    z\cdot(x \cdot zy) &= (zx\cdot z)\cdot y\\
    x\cdot(z \cdot yz) &= (xz\cdot y)\cdot z\\
    zx\cdot yz &= (z\cdot xy)\cdot z\\
    zx\cdot yz &= z\cdot(xy\cdot z).
  \end{align*}
\end{dfn}

Standard examples of nonassociative Moufang loops are the unit octonions with multiplication and the sphere $S^7$ with octonion multiplication. We now present some fundamental results on Moufang loops which we will need in later sections.

\begin{thm}[Moufang's Theorem]
  Suppose that $M$ is a Moufang loop and $x, y, z\in M$ are such that $x\cdot yz = xy\cdot z$. Then $\langle x, y, z\rangle$ is a group.
\end{thm}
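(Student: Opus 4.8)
The plan is to follow the classical route of Moufang and Bruck, reducing the theorem to two pillars: the \emph{diassociativity} of Moufang loops, and a propagation argument that upgrades the single hypothesized relation $x\cdot yz=xy\cdot z$ into the statement that each of $x,y,z$ lies in the nucleus of the subloop $L:=\langle x,y,z\rangle$. Once $x,y,z\in\nuc(L)$ we are done, since $\nuc(L)$ is always a subloop, so $L=\langle x,y,z\rangle\subseteq\nuc(L)$, forcing $L$ to be associative and hence a group.

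First I would collect the elementary consequences of the Moufang identities that get used repeatedly: specializing the identities (putting a variable equal to $1$ or to an inverse) together with the cancellativity of a quasigroup shows that a Moufang loop has the two-sided inverse property, $x\inv\cdot xy=y$ and $yx\cdot x\inv=y$ with a single two-sided inverse, and is flexible, $x\cdot yx=xy\cdot x$. I would also record the translation-map forms such as $L_zL_xL_z=L_{(zx)z}$ and $R_zR_xR_z=R_{z(xz)}$, which recast the Moufang laws as identities in $\mlt(M)$ and are convenient for the bookkeeping below.

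Next I would establish, or (since it is classical and contained in \cite{Bruck}, \cite{Pf}) simply invoke, the weak form of Moufang's theorem: every Moufang loop is diassociative, i.e.\ $\langle a,b\rangle$ is a group for all $a,b$. If proving it from scratch, one argues by induction on the length of words in $a,b$: the inverse property and the several equivalent Moufang identities let one rebracket an arbitrary product of such words and collect the letters, the base cases being precisely the Moufang and flexible laws. This is technical but routine in spirit.

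The substance of the theorem is the third step. Using diassociativity (which handles every two-generator subexpression) and the Moufang identities, one first shows that the single relation $x\cdot yz=xy\cdot z$ already forces the analogous associativity relations for every ordering of $x,y,z$; one then bootstraps along the loop operations, showing that an element which associates, in every position, with each of two others continues to do so with their product, so that the relation spreads across all words in $x,y,z$. The conclusion is that every associator of elements of $L$ is trivial, i.e.\ $x,y,z\in\nuc(L)$ and $L$ is a group. I expect the main obstacle to be exactly this propagation: the careful, iterated use of the equivalent forms of the Moufang identities and the inverse property needed to move the one hypothesized bracketing equality across products, all while remaining inside $L$ so that diassociativity stays available. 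A secondary difficulty, if one declines to cite it, is a fully self-contained proof of diassociativity.
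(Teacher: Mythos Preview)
The paper does not give its own proof of Moufang's Theorem: it is stated in the preliminaries as a classical fact, with the standard references \cite{Bel}, \cite{Bruck}, \cite{Pf} supplied for uncited results in that section. So there is nothing in the paper to compare your argument against; the authors simply import the theorem from the literature.

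Your outline is the orthodox Moufang--Bruck strategy and is the one you would find in \cite{Bruck} and \cite{Pf}: derive the inverse property and flexibility, establish diassociativity, then propagate the single associative triple to all words in $x,y,z$. As a plan this is correct, but be aware that you have understated the difficulty of the propagation step. Showing that the hypothesis $x\cdot yz=xy\cdot z$ forces associativity for all six orderings of $x,y,z$, and then that this closes under products inside $\langle x,y,z\rangle$, is a lengthy and delicate case analysis; diassociativity alone does not obviously give you the leverage to push an associator relation across a product, and the argument in \cite{Bruck} runs to several pages of identity manipulation (working via autotopisms or pseudo-automorphisms rather than a naive induction on word length). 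If you intend to write this out rather than cite it, you should expect that bootstrapping step to be the bulk of the work, and you will likely want the autotopism machinery rather than just the translation identities $L_zL_xL_z=L_{(zx)z}$ you mention.
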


\begin{prp}\label{gen-facts}
	Let $M$ be a finite Moufang loop. Then
    \begin{itemize}
        \item $M$ has the inverse property.

        \item $M$ is diassociative (and thus power-associative).

        \item For all $x, y\in M$, $\langle x, y\rangle$ is a group \cite{Moufang}.

  	    \item For all $x\in M$, $|x|$ divides $|M|$.

        \item Suppose that $|M| = p^k$ for $p$ prime, $k\in\ZZZ^+$. Then there exists $S\leq M$ with $|S| = p^{k - 1}$.
    \end{itemize}
\end{prp}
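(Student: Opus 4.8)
The plan is to take the five assertions in order, obtaining the inverse property, diassociativity, and power-associativity directly from Moufang's Theorem (which is available to us), and importing the remaining two facts from the structure theory of Moufang loops.

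First I would observe that putting one variable equal to $1$ in the Moufang identities reduces them to the alternative laws: from $z\cdot(x\cdot zy)=(zx\cdot z)\cdot y$ at $x=1$ we get $z\cdot zy=(z\cdot z)\cdot y$, and from $x\cdot(z\cdot yz)=(xz\cdot y)\cdot z$ at $y=1$ we get $(xz)\cdot z=x\cdot(z\cdot z)$. Hence, for any $x,y\in M$, the triple $(x,x,y)$ satisfies the associativity hypothesis of Moufang's Theorem, so $\langle x,x,y\rangle=\langle x,y\rangle$ is a group; this is diassociativity (the third bullet restates it, and I would attribute it to \cite{Moufang}), and the case $y=x$ gives power-associativity. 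The inverse property then follows at once: $x$ has a two-sided inverse $x^{-1}$ in the group $\langle x\rangle$, and in the group $\langle x,y\rangle$ one has $x^{-1}\cdot xy=y$ and $yx\cdot x^{-1}=y$, so $x\mapsto x^{-1}$ serves as both $\lambda$ and $\gamma$. Since all of this is classical I would also point to \cite{Bruck}.

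For $|x|\mid|M|$ I would invoke the Lagrange property of finite Moufang loops: $\langle x\rangle$ is a group, hence a subloop of $M$ of order $|x|$, so $|x|$ divides $|M|$. For the last bullet I would induct on $k$, where $|M|=p^k$, the case $k=1$ being trivial. A finite Moufang $p$-loop is centrally nilpotent, so $Z(M)$ is nontrivial; choosing $z\in Z(M)\setminus\{1\}$ and passing to a suitable power of $z$, we obtain a central (hence normal) subloop $C$ of order $p$. Then $M/C$ is a Moufang loop of order $p^{k-1}$, so by the inductive hypothesis it has a subloop of order $p^{k-2}$, and the preimage of this subloop under $M\to M/C$ has order $p^{k-1}$.

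The arguments above are short; the real content lies in the two imported facts, namely the Lagrange property for finite Moufang loops (which, for general $M$, rests on the classification of finite simple Moufang loops) and the central nilpotence of finite Moufang $p$-loops (due to Glauberman for odd $p$ and to Glauberman and Wright for $p=2$). These are the steps I would cite rather than reprove, and they are where any difficulty resides.
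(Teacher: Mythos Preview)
Your proposal is correct and matches the paper's treatment: the paper states these as standard facts with references (to \cite{Moufang}, \cite{Bruck}, etc.) and, for the last item, indicates exactly the inductive argument you give, noting that the center of a Moufang $p$-loop is nontrivial \cite{2-loops}, \cite{Glau} and that the proof then proceeds as for groups. Your explicit derivation of diassociativity from the alternative laws plus Moufang's Theorem is a nice touch but not a genuine departure.
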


Regarding the last statement, note that the center of a Moufang $p$-loop is nontrivial \cite{2-loops} \cite{Glau}. So an inductive argument identical to that used to prove the last result for groups will also prove the existence of such a subloop.

\subsection{Generalized quaternion groups}

In the process of proving Theorem \ref{mainThm} we will investigate a class of loops which behave analogously
  to the generalized quaternion groups. We recall some results on generalized quaternion groups to illustrate
  this similarity here.

\begin{dfn}
  The generalized quaternion groups are given by the presentation:
  \[Q_{4n} = \langle a, b : a^n = b^2, a^{2n} = 1, b^{-1}ab = a^{-1}\rangle\]
\end{dfn}

\begin{fct}\label{fct:genQuat}
  Let $G$ be a group which is not cyclic. Then the following are equivalent:
  \begin{itemize}
    \item $G$ is generalized quaternion.
    \item $G$ is isomorphic to $\langle e^{\frac{i\pi}{n}}, j\rangle$ as a subgroup of the unit
      quaternions for some $n$ \cite{Brown}.
    \item $G$ is a finite $p$ group in which every subgroup is cyclic \cite{Cartan}.
    \item $G$ is a finite $p$ group with a unique subloop of order $p$ \cite{Brown}.
  \end{itemize}
\end{fct}

\begin{rem}
  A direct result of fact \ref{fct:genQuat} is: \emph{A finite $p$-group with a unique subloop of order $p$ is
    either cyclic or generalized quaternion}. We will show that this result extends very naturally to Moufang loops.
\end{rem}

\begin{fct}\label{fct:genQuat-rep}
  Let $Q_{4n} = \langle a, b : a^n = b^2, a^{2n} = 1, b^{-1}ab = a^{-1}\rangle$. Then every element $x \in Q_{4n}$
    can be written uniquely as $x = a^k$ or $x = a^k b$ for some $k\in\ZZZ_{2n}$ \cite{genQuat}.
\end{fct}

\subsection{Power graphs}

To maintain generality, in what follows let $\textbf{A} = (A, \cdot)$ be a magma with $\cdot$ a power-associative
  binary operation.

\begin{dfn}
	The \textit{directed power graph} of \textbf{A} is the directed graph with vertex set $A$ and an edge $x\to y$
    if and only if $x^k = y$ for some $k\in\mathbb{Z}$.
\end{dfn}

\begin{dfn}
	The \textit{undirected power graph} of \textbf{A} is the graph with vertex set $A$ and an edge between $x$
    and $y$ if and only if $x^k = y$ for some $k\in \mathbb{Z}$ or $y^k = x$ for some $k\in\mathbb{Z}$.
\end{dfn}

So the undirected power graph of \textbf{A} is the underlying undirected graph of the directed power graph of
  \textbf{A}. In the remainder of this paper, \textit{power graph} will refer to the undirected power graph
  unless otherwise specified.

We will use the following definition for generalized octonion loops in the interest of closely
  following this characterization of generalized quaternion groups. In {\S}4 we will see that there are several
  alternate characterizations of generalized octonion loops.

\begin{dfn}
	Let $M$ be a nonassociative Moufang $p$-loop such that every abelian subloop of $M$ is cyclic,
    then we call $M$ a \textit{generalized octonion loop}.
\end{dfn}

\subsection{Chein's construction}

\begin{thm}[\cite{Chein}]\label{cnst-chein}
  Let $G$ be a group. For $1\neq c\in Z(G)$ and $u$ an indeterminate. Define $(M, \cdot)$ by $M = G\cup Gu$ and
  \begin{align*}
    g\cdot h &= gh\\
    g\cdot (hu) &= (hg)u\\
    gu\cdot h &= (gh^{-1})u\\
    gu\cdot hu &= ch^{-1}g
  \end{align*}
  for all $g, h\in G$. Then $M$ is a Moufang loop \cite{Chein}. Further, $M$ is associative if and only if $G$
    is abelian.
\end{thm}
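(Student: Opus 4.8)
Although the statement is due to Chein \cite{Chein}, here is the route the verification would take; it falls into three stages. \emph{Stage 1: $(M,\cdot)$ is a loop.} One reads off from the four defining products that $1\in G$ is a two-sided identity, and that for any $a,b\in M$ each of $a\cdot x=b$ and $y\cdot a=b$ has a unique solution: split into the four cases according to which of the cosets $G,Gu$ contains $a$ and which contains $b$, and use that left and right translations of the group $G$ are bijections. For instance $gu\cdot x=h\in G$ forces $x=(gh^{-1}c)u$, using that $c$ is central. Along the way record that $(gu)^2=c$ for every $g\in G$ (so in particular $u^2=c$). Since $u\cdot u^2=c^{-1}u$ while $u^2\cdot u=cu$, the loop $M$ is power-associative --- a prerequisite for being a Moufang loop --- only when $c^2=1$; we therefore assume $c^2=1$ throughout (this is automatic everywhere the construction is used below, since a generalized quaternion group has a two-element center). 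Granting $c^2=1$, the inverse of $g\in G$ is $g^{-1}$ and the inverse of $gu$ is $(gc)u$, and a brief check gives the inverse property $x^{-1}(xy)=y=(yx)x^{-1}$.

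\emph{Stage 2: the Moufang law.} Since any one of the four Moufang identities implies the others in an arbitrary loop, it is enough to check one of them, say $z\cdot(x\cdot zy)=(zx\cdot z)\cdot y$, for all $x,y,z\in M$; do this by case analysis on the $2^3$ choices of whether each of $x,y,z$ lies in $G$ or in $Gu$. The case $x,y,z\in G$ is nothing but the associativity of $G$. In each of the remaining seven cases, expanding the two sides with the defining products converts the desired equality into an identity that holds in the group $G$, the centrality of $c$ being invoked exactly to slide the factor $c$ past elements of $G$ whenever a product of two members of $Gu$ has been formed; for instance, when $x=pu$, $y=qu$, $z=ru$ all lie in $Gu$, both sides collapse (after using $c^2=1$) to the word $q^{-1}rp^{-1}r$ in $G$. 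None of the cases is difficult, but keeping straight the positions of the $u$'s and of the central factor $c$ is the one genuinely delicate point, and this bookkeeping is the main obstacle; carrying the inverse property from Stage 1 shortens the computation but does not dispose of the cases.

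\emph{Stage 3: associativity.} If $G$ is abelian, then running the same eight-case check for $(a\cdot b)\cdot d=a\cdot(b\cdot d)$ is routine --- the twists $g\mapsto g^{-1}$ become harmless once $G$ commutes --- and in fact $M$ is then exactly the group with presentation $\langle\, G,u \mid u^2=c,\ ugu^{-1}=g^{-1}\ (g\in G)\,\rangle$. Conversely, if $M$ is associative, then in particular $(g\cdot h)\cdot u=g\cdot(h\cdot u)$; but the left side expands to $(gh)u$ and the right side to $(hg)u$, so $gh=hg$ for all $g,h\in G$, i.e. $G$ is abelian. This completes the plan.
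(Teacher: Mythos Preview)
The paper does not prove this theorem at all; it is quoted as a preliminary result with a citation to Chein, so there is no in-paper argument to compare your proposal against. Your three-stage verification is the standard one and is correct.

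In fact you have noticed something the paper's statement glosses over: as written, with $c$ an arbitrary nontrivial central element, the construction need \emph{not} yield a Moufang loop. Your computation $u\cdot u^2=c^{-1}u$ versus $u^2\cdot u=cu$ is correct, so power-associativity (and hence the Moufang property) forces $c^2=1$. Chein's original formulation includes this hypothesis, and every use of the construction later in the paper takes $G=Q_{4n}$, whose center has order $2$, so the condition is automatically met there; but the theorem as stated in \S2.3 is slightly imprecise on this point, and your proposal rightly flags it.
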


Throughout the paper we will denote loops arising from this construction by $M(G, 2)$, where $G$ is the underlying
  group. We will show that the loops $M(Q_{4n}, 2)$, where $Q_{4n}$ is
  a generalized quaternion group, are generalized octonion.

\begin{thm}[\cite{Chein}]\label{thm-chein}
  Suppose that $M$ is a finite Moufang loop with a set of generators $\{u, u_1, \ldots, u_n\}$ such that
  \begin{itemize}
    \item $u\notin G = \langle u_1,\ldots, u_n\rangle$,
    \item $u^2\in N(\langle u^2, G\rangle)$,
    \item conjugation by $u$ maps $G$ into itself.
  \end{itemize}
  Let $k$ be the smallest positive integer such that $u^k\in G$. Then
  \begin{itemize}
    \item each element of $M$ can be expressed uniquely as $gu^\alpha$ where $g\in G$ and $0\leq \alpha < k$; and
    \item multiplication of elements of $M$ is given by
    \[(g_1u^\alpha)(g_2u^\beta) = [\theta^{-\beta}(\theta^\beta(g_1)\theta^{\beta - \alpha}(g_2))g_0^\epsilon]u^\rho\]
  \end{itemize}
  where
  \[\theta(g) = u^{-1}gu,\: g_0 = u^k\in G,\: \epsilon = \lfloor\frac{\alpha + \beta}{k}\rfloor,\text{ and }
    \rho = \alpha + \beta - \epsilon k\] \cite{Chein}.
\end{thm}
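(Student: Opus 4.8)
The plan is to route everything through the single ``twist'' map $\theta\colon G\to G$, $\theta(g)=u\inv gu$. By diassociativity of $M$ the expression $u\inv gu$ is unambiguous, and since translations in a loop are bijective $\theta$ is injective; by hypothesis $\theta$ maps $G$ into $G$, so by finiteness it is a bijection of $G$ (with inverse $g\mapsto ugu\inv$). Computing inside the groups $\langle u\rangle$ and $\langle u,g\rangle$ one obtains the sliding relations
\[gu^{i}=u^{i}\theta^{i}(g),\qquad u^{i}g=\theta^{-i}(g)u^{i},\qquad \theta(g_{0})=g_{0},\qquad \theta^{k}(g)=g_{0}\inv g g_{0},\]
where $g_{0}=u^{k}\in G$; in particular $g_{0}$ commutes with every power of $u$. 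Two features govern the argument. First, $\theta$ need \emph{not} be an automorphism of $G$: applied to the loops $M(G,2)$ of Chein's construction it is the inversion map, which is a homomorphism only when $G$ is abelian, so no step below may use multiplicativity of $\theta$. Second, the only associativity available for free is inside $\langle u,g\rangle$ for a \emph{single} $g\in G$; the hypothesis $u^{2}\in N(H)$ with $H:=\langle u^{2},G\rangle$, together with the fact that a nucleus is a subgroup, upgrades this to free reassociation whenever an even power of $u$ (hence $g_{0}$, when $k$ is even) meets elements of $H$. Every other regrouping must be paid for with a Moufang identity.

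I would then recast the two conclusions as one closure statement. Put $N=\bigcup_{\alpha=0}^{k-1}Gu^{\alpha}\subseteq M$. Then $1\in N$, each generator $u,u_{1},\dots,u_{n}$ lies in $N$, and closure of $N$ under inverses is immediate from the sliding relations (one computes $(gu^{\alpha})\inv=(\theta^{\alpha}(g\inv)g_{0}\inv)u^{k-\alpha}$ for $1\le\alpha<k$); hence if $N$ is closed under multiplication it is a subloop containing the generating set, so $N=M$ and every element of $M$ has the form $gu^{\alpha}$. But closure of $N$ under multiplication is exactly the asserted formula
\[(g_{1}u^{\alpha})(g_{2}u^{\beta})=\big[\theta^{-\beta}\big(\theta^{\beta}(g_{1})\,\theta^{\beta-\alpha}(g_{2})\big)\,g_{0}^{\epsilon}\big]u^{\rho}\qquad(g_{1},g_{2}\in G,\ 0\le\alpha,\beta<k),\]
with $\epsilon=\lfloor(\alpha+\beta)/k\rfloor\in\{0,1\}$ the carry and $\rho=\alpha+\beta-\epsilon k$. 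Uniqueness of the normal form is separate and quick: if $g_{1}u^{\alpha}=g_{2}u^{\beta}$ with $0\le\alpha\le\beta<k$, the sliding relations give $u^{\beta-\alpha}=\theta^{\alpha}(g_{1})\,\theta^{\beta}(g_{2})\inv\in G$ with $0\le\beta-\alpha<k$, so minimality of $k$ forces $\alpha=\beta$ and then $g_{1}=g_{2}$. Thus the whole theorem reduces to the multiplication formula.

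For the formula I would first peel off the carry. The ``unreduced'' identity, valid for all $\alpha,\beta\ge0$, reads $(g_{1}u^{\alpha})(g_{2}u^{\beta})=w\,u^{\alpha+\beta}$ with $w=\theta^{-\beta}\big(\theta^{\beta}(g_{1})\theta^{\beta-\alpha}(g_{2})\big)\in G$; granting it, write $u^{\alpha+\beta}=g_{0}^{\epsilon}u^{\rho}$ and note that $w$, $g_{0}^{\epsilon}$, $u^{\rho}$ all lie in the group $\langle u,w\rangle$, so $w\,u^{\alpha+\beta}=(wg_{0}^{\epsilon})u^{\rho}$, which is the stated formula. To prove the unreduced identity, for $\alpha,\beta\ge1$ substitute $g_{1}u^{\alpha}=u(\theta(g_{1})u^{\alpha-1})$ and $g_{2}u^{\beta}=(g_{2}u^{\beta-1})u$; the product then has the shape $(u\cdot x)(y\cdot u)$, i.e.\ $zx\cdot yz$ with $z=u$, and the Moufang identity $zx\cdot yz=z(xy\cdot z)$ strips off the outer copies of $u$, replacing the product by $u\big((xy)\,u\big)$ in which $xy=(\theta(g_{1})u^{\alpha-1})(g_{2}u^{\beta-1})$ is a shorter product of the same type. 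Iterating reduces to the base cases in which at most one of $\alpha,\beta$ is nonzero (negative exponents being normalized by $g_{2}u^{-1}=(g_{2}g_{0}\inv)u^{k-1}$), which are themselves small Moufang computations; one then checks that the twist $\theta^{\beta-\alpha}$ on $g_{2}$ and the carry $g_{0}^{\epsilon}$ are reproduced correctly by the bookkeeping, and that the residual factors of $u$ collapse via the sliding relations.

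The main obstacle is precisely this last step. Because $M$ is only Moufang and $\theta$ is not multiplicative, one cannot simplify $\theta^{-\beta}(\theta^{\beta}(g_{1})\theta^{\beta-\alpha}(g_{2}))$ to $g_{1}\theta^{-\alpha}(g_{2})$, nor reassociate freely once two or more distinct elements of $G$ interleave with powers of $u$; so the induction must be arranged so that at every stage the product sits in exactly the position where one Moufang identity (or diassociativity, or nuclearity of an even power of $u$) applies, and one must verify both that this can always be engineered and that the induction genuinely terminates. The hypothesis $u^{2}\in N(H)$ is what rescues the base cases and the handling of the carry, where powers of $u$ must be commuted past elements of $G$. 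Once the unreduced formula is established, everything else—closure of $N$, the identification $N=M$, and uniqueness of the normal form—is formal.
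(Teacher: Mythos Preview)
The paper does not prove this statement: Theorem~\ref{thm-chein} is quoted from Chein's paper \cite{Chein} and carries that citation in lieu of a proof, so there is no in-paper argument to compare your proposal against. Your outline is in the spirit of Chein's original proof (normal form via the twist $\theta$, closure of $\bigcup_{\alpha}Gu^{\alpha}$, and a Moufang-identity induction for the product formula), and you correctly flag the two genuine hazards---that $\theta$ is not multiplicative and that reassociation is only available through diassociativity, the Moufang identities, or nuclearity of $u^{2}$. If you want to turn the sketch into a complete proof, the place that needs the most care is the termination and base cases of your induction: after applying $zx\cdot yz=z\cdot(xy\cdot z)$ you obtain $u\big((xy)u\big)$, not $(xy)u^{2}$, so you must explain precisely how the outer $u$'s are absorbed at each step and why the resulting expression again matches the unreduced formula with the correct powers of $\theta$; Chein handles this by a careful case analysis rather than a single clean recursion.
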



\section{Moufang $p$-loops with a unique subloop of order $p$}

We will begin by classifying Moufang $p$-loops $M$ with a unique subloop of order $p$. In the proof of
  Theorem \ref{mainThm}, we will handle such loops separately. Note that every nontrivial subloop of a
  Moufang loop of order $p^n$ with a unique subloop of order $p$ also has a unique subloop of order $p$
  by the last point of Proposition \ref{gen-facts}.

\begin{thm}
    A Moufang $p$-loop $M$ with a unique subloop of order $p$ is either a cyclic group, a generalized
      quaternion group, or a generalized octonion loop. These last two only occur when $p = 2$.
\end{thm}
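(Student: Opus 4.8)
The plan is to observe that this theorem is nothing more than a restatement of Theorem \ref{moufangDesc} in the terminology developed in {\S}4, so the entire content has already been proved. First I would invoke Theorem \ref{moufangDesc}: a Moufang $p$-loop $M$ with a unique subloop of order $p$ is cyclic, generalized quaternion, or isomorphic to $M(Q_{4n}, 2)$ for some $n$, with the last two possibilities occurring only when $p = 2$. Then I would apply Theorem \ref{doubling}, which asserts that a Moufang loop is a generalized octonion loop if and only if it is isomorphic to $M(Q_{4n}, 2)$.

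Substituting the equivalence of Theorem \ref{doubling} into the trichotomy of Theorem \ref{moufangDesc} replaces the clause ``isomorphic to $M(Q_{4n}, 2)$'' with ``a generalized octonion loop,'' which is exactly the desired conclusion. The parenthetical restriction that the last two cases occur only for $p = 2$ is inherited verbatim from Theorem \ref{moufangDesc}; it is also independently consistent with the definition of a generalized octonion loop, since such loops have order a power of $2$ by Theorem \ref{doubling} together with the fact (used repeatedly in {\S}4) that every loop $M(Q_{4n}, 2)$ has $2$-power order.

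There is no real obstacle here: all of the substantive work — the classification argument of {\S}3 and the identification of $M(Q_{4n},2)$ with generalized octonion loops in {\S}4 — has already been carried out. The only thing worth spelling out in the written proof is that the three cases of the restated theorem are mutually exclusive (a cyclic group is neither generalized quaternion nor nonassociative, and a generalized octonion loop is nonassociative by definition), so that the classification is genuinely a partition and not merely a covering.
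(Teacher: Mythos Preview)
Your proposal is correct and matches the paper's approach exactly: the paper simply remarks ``Note that with the above results, we can restate Theorem \ref{moufangDesc} as'' and then states this theorem without further argument, which is precisely what you propose—invoking Theorem \ref{moufangDesc} and substituting via Theorem \ref{doubling}. Your added remark about mutual exclusivity is extra but harmless.
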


We will first handle the simpler case that $p$ is an odd prime.

\begin{lem}[\cite{Burnside}]\label{lem-burnside}
  Let $G$ be a group of order $p^n$, $p > 2$ prime with a unique subgroup of order $p^s$ for some $0 < s < n$.
    Then $G$ is cyclic. 
\end{lem}

\begin{lem}\label{not2power}
	Let $M$ be a Moufang loop of order $p^n$ for some prime $p > 2$ and $n\in\mathbb{N}$ such that $M$ has a
    unique subloop of order $p$. Then $M$ is a cyclic group.
\end{lem}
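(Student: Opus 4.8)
The plan is to induct on $n$, proving directly that $M$ is a cyclic group; the real content is showing $M$ is associative, after which Lemma \ref{lem-burnside}, which applies because $p$ is odd, immediately delivers cyclicity. The base case $n=1$ is trivial: $|M|=p$, so picking any non-identity $x$, the subloop $\langle x\rangle$ is a cyclic group whose order divides $p$ and exceeds $1$, whence $M=\langle x\rangle$ is cyclic of order $p$.

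For the inductive step I would take $n\ge 2$ and, using the last item of Proposition \ref{gen-facts}, fix a subloop $S\le M$ with $|S|=p^{n-1}$. As a nontrivial subloop of $M$, $S$ again has a unique subloop of order $p$ (as noted before Theorem \ref{moufangDesc}), and it is a Moufang $p$-loop of smaller exponent, so by the induction hypothesis $S$ is cyclic; write $S=\langle a\rangle$. Next I would choose any $b\in M\setminus S$ (possible since $|S|<|M|$) and set $H=\langle a,b\rangle$, which is a group by the third item of Proposition \ref{gen-facts}. Since $a\in H$ we have $S=\langle a\rangle\subseteq H$, and since $b\in H\setminus S$ we get $p^{n-1}<|H|\le |M|=p^n$. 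Every element of $M$, hence of $H$, has order dividing $p^n$ (Proposition \ref{gen-facts}), so $H$ is a finite group all of whose elements have $p$-power order; by Cauchy's theorem $H$ is a $p$-group, and the displayed order bound then forces $|H|=p^n$, i.e. $M=H$ is a group. At that point $M$ is a group of order $p^n$ with a unique subgroup of order $p$ and $p$ is odd, so Lemma \ref{lem-burnside} gives that $M$ is cyclic, closing the induction.

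I expect the only genuine obstacle to be the associativity step, i.e. passing from ``$M$ has a cyclic subloop of index $p$'' to ``$M$ is a group.'' It works because in a finite Moufang loop any two-generated subloop $\langle a,b\rangle$ is a group (Proposition \ref{gen-facts}), so adjoining a single generator to the cyclic subloop $S$ already produces a group, which the $p$-loop order constraints force to be all of $M$. This is exactly where $p>2$ is used: for $p=2$ the inductive hypothesis would only say that $S$ is cyclic, generalized quaternion, or $M(Q_{4m},2)$, and when $S$ is (for instance) a copy of $Q_8$ it is not singly generated, so the argument above does not apply and $M$ can genuinely fail to be associative — this is the source of the extra family $M(Q_{4n},2)$ in Theorem \ref{moufangDesc}.
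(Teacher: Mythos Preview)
Your proof is correct, but it follows a different path from the paper's. The paper does not induct: it argues directly that for any $x,y,z\in M$, either $\langle x,y\rangle=M$ (so $M$ is a group by diassociativity) or $\langle x,y\rangle\subsetneq M$ is a $p$-group with a unique subgroup of order $p$, hence cyclic by Lemma~\ref{lem-burnside}; writing $x=g^i$, $y=g^j$ then gives $x\cdot yz=g^{i+j}z=xy\cdot z$ by diassociativity, so the associative law holds for all triples and $M$ is a group. Your route instead manufactures a cyclic index-$p$ subloop $S=\langle a\rangle$ via induction and the existence of maximal subloops (Proposition~\ref{gen-facts}), then observes that adjoining any $b\notin S$ yields a two-generated, hence associative, subloop that must be all of $M$ by a Cauchy/order count. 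Both arguments hinge on the same core fact---diassociativity plus ``odd $p$-group with unique order-$p$ subgroup is cyclic''---but the paper's version avoids both the induction and the appeal to the existence of an index-$p$ subloop, while yours makes the two-generation of $M$ explicit and gives a cleaner explanation of why the argument collapses at $p=2$.
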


\begin{proof}
    Let $x, y, z\in M$ be given. If $\langle x, y\rangle = M$, then $M$ is a group by diassociativity and
      we are done by Lemma \ref{lem-burnside}. Otherwise $\langle x, y\rangle \subsetneq M$
      must be a $p$-group with a unique subgroup of order $p$ and thus cyclic by Lemma \ref{lem-burnside}.
      Say $\langle x, y\rangle = \langle g\rangle$ and $x = g^i$, $y = g^j$. Then
      $x\cdot yz = g^i \cdot g^j z = g^{i + j}z = xy\cdot z$ by diassociativity. Hence in either case 
      $M$ is a group and thus cyclic by Lemma \ref{lem-burnside}.
\end{proof}

We will now handle the case $p = 2$. In what follows, let $M$ be a nonassociative Moufang loop of order $2^n$
  with a unique subloop of order $2$.

\begin{lem}\label{order-lem}
	For all $x, y\in M$ exactly one of the following holds:
  \begin{itemize}
    \item $xy = yx$,
    \item $xy = y^{-1}x$ and $|x| = 4$,
    \item $xy = yx^{-1}$ and $|y| = 4$,
    \item $|x| = |y| = 4$.
  \end{itemize}
\end{lem}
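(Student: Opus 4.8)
The plan is to work inside the two-generated subloop $\langle x, y\rangle$, which by Proposition \ref{gen-facts} is a group, and moreover is a $2$-group with a unique subgroup of order $2$. Such groups are classified: cyclic or generalized quaternion. In either case $\langle x, y\rangle$ has a well-understood structure, so the question reduces to a statement about pairs of elements in cyclic and generalized quaternion $2$-groups. If $\langle x, y\rangle$ is cyclic then $x$ and $y$ commute and we are in the first case, so the interesting work is entirely inside a generalized quaternion group $Q_{2^m} = \langle a, b \mid a^{2^{m-1}} = 1,\ b^2 = a^{2^{m-2}},\ b^{-1}ab = a^{-1}\rangle$.

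First I would recall the conjugation structure of $Q_{2^m}$: it has a unique cyclic subgroup $\langle a\rangle$ of index $2$, every element outside $\langle a\rangle$ has order $4$ and squares to the unique involution $a^{2^{m-2}}$, and conjugation by such an outside element inverts every element of $\langle a\rangle$. From this I would run a short case analysis on where $x$ and $y$ sit relative to the index-$2$ cyclic subgroup $C = \langle a\rangle$. If both $x, y \in C$, they commute. If $x \in C$ and $y \notin C$, then $|y| = 4$ and $y^{-1}xy = x^{-1}$, i.e. $xy = yx^{-1}$, giving the third case (with the first case absorbing the possibility $x = x^{-1}$). Symmetrically, if $y \in C$ and $x \notin C$ we get $xy = y^{-1}x$ with $|x| = 4$, the second case. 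If both $x, y \notin C$, then both have order $4$, which is the fourth case. I should also check the mild degeneracies — e.g. if $x$ or $y$ equals $1$ or the central involution — but those all fall under $xy = yx$.

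The one genuine subtlety, and the step I expect to take the most care, is \emph{exclusivity}: showing that exactly one of the four alternatives holds, not merely at least one. The cases overlap precisely when elements have small order: for instance if $|x| \le 2$ then $x = x^{-1}$, so "$xy = y^{-1}x$" and "$xy = yx$" could both be read as true. The clean way to handle this is to phrase the dichotomy as: either $x$ and $y$ commute (first case), or they do not, and in the non-commuting situation exactly one of the remaining three descriptions applies. When $x, y$ do not commute, $\langle x,y\rangle$ is nonabelian, hence generalized quaternion, hence at least one of $x, y$ lies outside $C$; if exactly one does (say $y\notin C$, $x \in C$) then $x \notin \{1, x^{-1}\}$ forces $|x| \notin \{1,2\}$, and since $x$ lies in the cyclic group $C$ with a unique involution, actually any $x \in C$ with $x\neq x\inv$ — combined with $xy = yx\inv \neq yx$ — lands us unambiguously in the third case and rules out the second (which would need $x\notin C$) and the fourth (which would need $x\notin C$); the value $|y|=4$ is automatic. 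The symmetric argument handles the second case, and if both lie outside $C$ we are in the fourth case and neither the second nor third can hold since those require one of the two elements to centralize... I would instead verify directly that the relations "$xy = y\inv x$ with $x\notin C$" and "$xy = yx\inv$ with $y \notin C$" are incompatible with $x,y \notin C$ by a direct computation in $Q_{2^m}$, using that for $x, y \notin C$ the product $xy \in C$ while $y\inv x, y x\inv \notin C$ unless $x, y$ generate a commuting pair — a contradiction with non-commutativity. Assembling these observations gives the stated four-way exclusive dichotomy.
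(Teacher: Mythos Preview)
Your core approach matches the paper's exactly: reduce to the two-generated group $\langle x,y\rangle$, which is cyclic or generalized quaternion, and then split into cases according to which of $x,y$ lie in the index-$2$ cyclic subgroup $C=\langle a\rangle$. The paper carries out precisely this four-way case split and nothing more.

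Where you diverge from the paper is in the effort you spend on \emph{exclusivity}. The paper's proof establishes only that \emph{at least one} of the four alternatives holds; it never argues for ``exactly one,'' and in fact ``exactly one'' is false as literally stated. For instance, take $x=y$ with $|x|=4$: then $xy=yx$ and $|x|=|y|=4$, so the first and fourth bullets both hold. More to the point of your attempted salvage, take $Q_8=\langle a,b\rangle$ with $x=a\in C$ and $y=b\notin C$: here $x$ and $y$ do not commute, $xy=yx^{-1}$ with $|y|=4$ (third bullet), and also $|x|=|y|=4$ (fourth bullet). So even after restricting to the non-commuting case, the last three alternatives are not mutually exclusive, and your sketch for separating the fourth case from the second and third cannot succeed.

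This is not a real problem for the paper: downstream (Lemmas~\ref{index2subloop} and the one following it) only the ``at least one'' direction is used, with the extra input that the element $x_0$ under consideration has order strictly greater than $4$, which is what actually pins down the relevant case. So drop the exclusivity discussion; your case analysis already proves what the paper proves and what the paper needs.
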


\begin{proof}
	If $\langle x, y\rangle$ is cyclic, then $xy = yx$, so assume that
    $G = \langle x, y\rangle = \langle a, b | a^{2n} = b^4 = 1, ab = ba^{-1}\rangle$ is generalized
    quaternion. All elements of $G$ can be written in the form $a^i b$ or $a^i$ for some
    $i\in\mathbb{N}$. If $x = a^i$, $y = a^j$, then $xy = yx$. If $x = a^i b$,
    then $x^2 = a^i b a^i b = b a^{-i} a^i b = b^2$. Similarly, if $y = a^j b$, then $|y| = 4$.  
    If $x = a^i$, $y = a^j b$, then $xy = a^i a^j b = a^j b a^{-i} = yx^{-1}$.
    Finally, if $x = a^i b$, $y = a^j$, then $xy = a^i b a^j = a^{-j} a^i b = y^{-1} x$.
\end{proof}


\section{Generalized octonion loops}

To make the Theorem \ref{moufangDesc2} more closely follow the result for groups we will investigate the
  the generalized octonion loops. We will show that they behave analogously to generalized quaternion groups.

\begin{thm}
  A finite Moufang loop is generalized octonion if and only if it is a non-associative Moufang $p$-loop
    with a unique element of order $p$.
\end{thm}

\begin{proof}
  First let $M$ be a finite non-associative Moufang $p$-loop with a unique subloop of order $p$. Let $S\leq M$ be
    an associative commutative subloop. Then $S$ has a unique subloop of order $p$ and thus is cyclic by
    Lemma \ref{lem-burnside}. Thus $M$ is generalized octonion.

  Now let $M$ be a generalized quaternion group. By Lemma \ref{not2power} $M$ must be a Moufang $2$-loop. It is
    immediate that $M$ has a subloop of order $2$ by the elementwise Lagrange property. We need only show that it
    is unique. Suppose that $S, T\leq M$ with $|S| = |T| = 2$ and $S\neq T$. Say that $1\neq s\in S$ and $1\neq t\in T$.
    Then $\langle s, t\rangle$ is a group in which every commutative subgroup is cyclic and thus is either cyclic or
    generalized quaternion. But both cyclic $2$-groups and generalized quaternion groups have unique elements of order
    $2$. Thus $s = t$ and $M$ is generalized octonion.
\end{proof}

\begin{thm}\label{doubling}
  $M(Q_{4n}, 2)$ is a generalized octonion loop.
\end{thm}

\begin{proof}
  It is shown in \cite{Chein} that $M$ is a nonassociative Moufang loop. So every associative subloop of
    $M$ is either cyclic or generalized quaternion and thus every commutative and associative subloop of
    $M$ is cyclic. Thus $M$ is generalized octonion.
\end{proof}

Let $\{1, e_1, \ldots, e_7\}$ be the standard basis for the octonions.

\begin{thm}
  The subloop of the unit octonions generated by $\{e^{\frac{e_1\pi}{n}}, e_3, e_5\}$ for some $n\in\NNN$.
    Is generalized octonion.
\end{thm}

\begin{proof}
  Let $M = \langle e^{\frac{e_1\pi}{n}}, e_3, e_5\rangle$ and note that $M$ is nonassociative. We will use
    Theorem 1 in \cite{Chein} to show that this is precisely $M(Q_{4n}, 2)$, taking the presentation
    $Q_{4n}  = \langle e^{\frac{e_1\pi}{n}}, e_3\rangle$. First note that $e_5\notin Q_{4n}$. Further,
    $e_5^2 = -1\in N(\langle -1, Q_{4n}\rangle)$. Finally
	\[e_5 e_3 e_5^{-1} = (e^{\frac{e_1\pi}{n}})^{-1}\in Q_{4n}\]

  Thus $Q_{4n}$ is closed under conjugation by $e_5$ and by Theorem \ref{thm-chein}, $M$ is precisely
    $M(Q_{4n}, 2)$. We showed in Theorem \ref{doubling} that this is the same as $M$ being a generalized
    octonion loop.
\end{proof}

\begin{lem}\label{u-order-4}
  Suppose that $M$ is generalized octonion with $S_1\unlhd S_2\unlhd\ldots\unlhd S_m = M$ where $S_{i + 1}$
    satisfies Theorem \ref{thm-chein} with $G = S_i$. Then there exists such a sequence with $|u| = 4$ in
    Theorem \ref{thm-chein} at each stage.
\end{lem}

\begin{proof}
  Suppose that $S_k$ is the first index at which $|u|\neq 4$. Let $s_0$ be the $u$ in Theorem \ref{thm-chein}
    at this stage. Then $\langle S_1, s_0\rangle$ is a generalized quaternion group since $s_0$ commutes
    with all elements of $S_1$ by Theorem \ref{gen-facts}.

  Now let $1 < i < k$ and $s_i$ be the $u$ in Theorem \ref{thm-chein} for $S_i$. Then $|s_i| = 4$ by assumption
    so $s_i^2$ is the unique element of order $2$ in $M$ and $s_i^2\in N(M)$. Further, $s_i s_0 = s_0 s_i$ since
    $|s_0| > 4$. Thus conjugation by $s_i$ maps $\langle S_{i - 1}, s_0\rangle$ into itself. So the hypotheses
    of Theorem \ref{thm-chein} are satisfied. So we have constructed a sequence
    $\langle S_1, S_0\rangle\unlhd \langle S_2, s_0\rangle\unlhd \ldots \unlhd \langle S_{k - 1}, s_0\rangle = S_k$
    as needed. The same procedure can be repeated for any other stage at which $|u| \neq 4$. So the proof is complete.
\end{proof}

\begin{thm}\label{genOctChain}
  Let $M$ be a generalized octonion loop. Then there exist $S_1\unlhd S_2\unlhd\ldots\unlhd M$ such that
  \begin{enumerate}
    \item $S_{i + 1}$ satisfies the hypotheses of Theorem \ref{thm-chein} with $G = S_i$.
    \item $S_1$ is a generalized quaternion group.
  \end{enumerate}
\end{thm}

\begin{proof}
  Suppose toward a contradiction that $M$ is a minimal counterexample and $|M| = 2^n$. By Theorem
    \ref{gen-facts} there exists $S \leq M$ with $|S| = 2^{n - 1}$. By the minimality of $M$ there exists
    $S_1 \unlhd S_2\unlhd \ldots \unlhd S_k = S$ where $S_{i + 1}$ satisfies Theorem \ref{thm-chein} with
    $G = S_i$ and $S_1$ is generalized quaternion. By Lemma \ref{u-order-4} 2e can without loss of generality
    assume that at each stage the $u$ in Theorem \ref{thm-chein} has order $4$. 

  Let $v\in M - S$ be given. Suppose first that $vs = sv$ for all $s\in S$. Let $\langle s_0\rangle$ be a
    cyclic group of maximal order contained in $S_1$. Then $\langle v, s_0\rangle$ is a cyclic group of
    strictly larger order. We will show that $\langle S_{i + 1}, v\rangle$ satisfies Theorem \ref{thm-chein}
    with $G = \langle S_i, v\rangle$ for all $1\leq i < k$.

  Let $u$ be as in Theorem \ref{thm-chein} for $S_{i + 1}$. Then $u^2$ is the unique element of order $2$ in
    $S_{i + 1}$ and thus is in $N(M)$ and so $N(\langle S_i, v^2\rangle)$. Further conjugation by $u$ maps
    $S_i$ into itself and $uv = vu$, so conjugation by $u$ maps $\langle S_i, v\rangle$ into itself. Thus
    the hypotheses of Theorem \ref{thm-chein} are satisfied.

  Now suppose that $us\neq su$ for some $s\in S$. Then $|u| = 4$ by Lemma \ref{order-lem}. Then $S$ is an index $2$ subloop
    and thus normal in $M$, so conjugation by $u$ takes $S$ into itself. Further, $u^2$ is the unique element of order $2$
    in $S$ since $|u| = 4$. Thus $u^2\in N(\langle u^2, S\rangle)$ and the hypotheses of Theorem \ref{thm-chein} are
    satisfied.
\end{proof}

Recall that the generalized quaternion group of order $4n$ can be presented as
  $Q_{4n} = \langle a, b | a^n = b^2, a^{2n} = 1, b^{-1}ab = a^{-1}\rangle$. Viewing the generalized
  octonion loop of order $16$ as $O_{16} = M(Q_8, 2)$ with this presentation yields the power graph of $O_{16}$
  presented above. Note that the non-identity vertex $a^2$ is connected to all other vertices. We will 
  show later that generalized octonion loops are the only nonassociative Moufang loops with this feature.

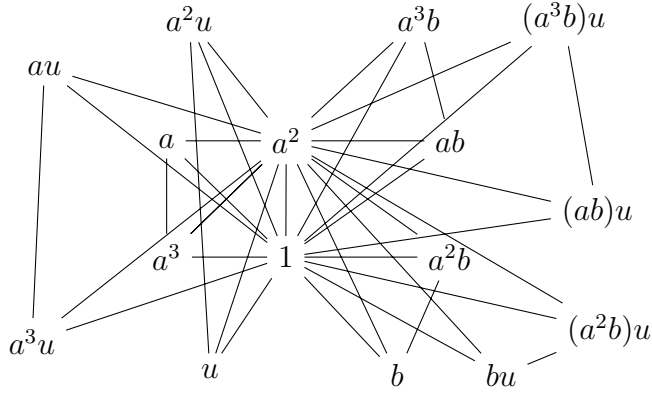
\begin{figure}[h]
  \centering
  \begin{tikzpicture}
  \node (a) at (0, 0){$a$};
  \node (a3) [below = 1cm of a]{$a^3$};
  \node (a2) [right = 1cm of a]{$a^2$};
  \node (1)  [right = 1cm of a3]{$1$};
  \node (a3b) [above right = 1cm and 1cm of a2]{$a^3b$};
  \node (ab) [right = 1.5cm of a2]{$ab$};
  \node (a2b)[right = 1.5cm of 1]{$a^2b$};
  \node (b) [below right = 1cm and 1cm of 1]{$b$};
  
  \node (u) [below left = 1cm and .5cm of 1]{$u$};
  \node (au)[above left = .5cm and 1cm of a]{$au$};
  \node (a3u) [below left = .5cm and 1cm of a3]{$a^3 u$};
  \node (a2u) [above left = 1cm and .5cm of a2]{$a^2u$};
  \node (a3bu) [right = .8cm of a3b]{$(a^3b)u$};
  \node (abu) [below right = .3cm and 1cm of ab]{$(ab)u$};
  \node (a2bu) [below right = .3cm and 1cm of a2b]{$(a^2b)u$};
  \node (bu) [right = .8cm of b]{$bu$};
  
  \draw 	(a) -- (a3)
  (a) -- (1)
  (a) -- (a2)
  (a3) -- (a2)
  (a3) -- (a2)
  (a3) -- (1)
  (a2) -- (a3b)
  (a2) -- (a2b)
  (a2) -- (ab)
  (a2) -- (b)
  (a2) -- (1)
  (a3b) -- (1)
  (a3b) -- (ab)
  (ab) -- (1)
  (a2b) -- (1)
  (a2b) -- (b)
  (b) -- (1)
  
  (u) -- (a2)
  (u) -- (1)
  
  (a3u) -- (a2)
  (a3u) -- (1)
  (a3u) -- (au)
  
  (au) -- (a2)
  (au) -- (1)
  
  (a2u) -- (a2)
  (a2u) -- (1)
  (a2u) -- (u)
  
  (a3bu) -- (a2)
  (a3bu) -- (1)
  (a3bu) -- (abu)
  
  (abu) -- (a2)
  (abu) -- (1)
  
  (a2bu) -- (a2)
  (a2bu) -- (1)
  (a2bu) -- (bu)
  
  (bu) -- (a2)
  (bu) -- (1);
  \end{tikzpicture}
  \caption{The (undirected) power graph of $O_{16}$}
\end{figure}


\section{Undirected power graphs determine directed power graphs} \label{gen-oct}

With Theorem \ref{genOctChain} at our disposal we can now translate the argument in \cite{PGII} to the
  Moufang case to show that two Moufang loops with isomorphic undirected power graphs must have isomorphic
  directed power graphs. As in \cite{PGII} the proof is split into two cases depending on whether the
  identity vertex is the only one connected to all other vertices. In what follows, let $M$ be a Moufang
  loop with power graph $\Gamma$.

\subsection{Non-identity vertex connected to all others}\label{multi-vert}

\begin{lem}\label{connectedVertex}
  Suppose that $x\in M$ with $x\neq 1$ and $x$ is connected to all other vertices in $\Gamma$ and $p$ is a
    prime divisor of $\Exp(M)$. Then $M$ has a unique subgroup $P$ of order $p$ and
    $P = \langle x^n \rangle$ for some $n$.
\end{lem}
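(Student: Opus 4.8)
The plan is to exploit the fact that $x$ is a power-related to every element of $M$, which severely constrains the structure of $M$. First I would observe that since $x$ is connected to all other vertices, for every $y \in M$ either $y \in \langle x \rangle$ or $x \in \langle y \rangle$. In particular, taking $y$ to range over elements of prime power order, I would argue that $M$ can have at most one subgroup of each prime order $p$: if $P_1 = \langle a \rangle$ and $P_2 = \langle b \rangle$ were two distinct subgroups of order $p$, then neither $a$ nor $b$ is a power of the other (both have order $p$, and $\langle a \rangle \neq \langle b \rangle$), so by the connectivity hypothesis applied to the pair $\{a, x\}$ we get $a \in \langle x \rangle$ or $x \in \langle a \rangle$; the latter forces $x$ to have order dividing $p$, hence $\langle x \rangle = \langle a \rangle$ (as $x \neq 1$), and symmetrically $\langle x \rangle = \langle b \rangle$, contradiction. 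So the only way to avoid a contradiction with the existence of \emph{two} order-$p$ subgroups is that $x$ itself generates one of them and every other order-$p$ element lies in $\langle x \rangle$ — but $\langle x \rangle$, being cyclic, has a unique subgroup of order $p$, so in fact there is a unique subgroup of order $p$ in all of $M$.

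Next I would pin down that $P \leq \langle x \rangle$. Let $q$ be \emph{any} prime divisor of $|x|$; then $\langle x \rangle$ contains an element of order $q$, giving a subgroup of order $q$, which by the previous paragraph is the unique one in $M$. Now let $p$ be a prime divisor of $\Exp(M)$; by the last bullet of Proposition \ref{gen-facts}, $M$ (being a finite Moufang loop, hence by Cauchy-type results — or directly since $p \mid \Exp(M)$ means some element has order divisible by $p$) contains an element of order $p$, hence a subgroup of order $p$, which is the unique such $P$. It remains to show $p \mid |x|$, equivalently $P \leq \langle x \rangle$. Pick $y$ of order $p$ generating $P$. By connectivity, $y \in \langle x \rangle$ or $x \in \langle y \rangle = P$; in the first case $p \mid |x|$ and we are done (take $n = |x|/p$), in the second case $x$ has order $p$ (since $x \neq 1$), so $\langle x \rangle = P$ and again $P = \langle x^n \rangle$ with $n = 1$.

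The main obstacle I anticipate is not the group-theoretic skeleton above but making sure the diassociativity of Moufang loops is used correctly wherever I write $\langle x \rangle$, $\langle y \rangle$, $\langle a, b\rangle$ — in a Moufang loop all of these are genuine groups (Proposition \ref{gen-facts}), so "$\langle x \rangle$ cyclic with a unique subgroup of order $p$" and "$x \in \langle y \rangle \Rightarrow |x|$ divides $|y|$" are legitimate. The one genuinely loop-theoretic point I would want to state carefully is that a \emph{subgroup} of order $p$ in a Moufang loop is the same thing as a subloop of order $p$ (both are just $\langle y \rangle$ for $y$ of order $p$), so the uniqueness conclusion matches the hypothesis format needed later in Lemma \ref{connectedVertex}'s applications and in Theorem \ref{moufangDesc2}. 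Finally I would double-check the edge case $x$ of prime order directly, since then $\langle x \rangle$ \emph{is} the unique subgroup $P$ and $n = 1$ works.
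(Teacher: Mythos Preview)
Your proposal is correct and follows essentially the same approach as the paper: both argue that any element $y$ of order $p$ must lie in $\langle x\rangle$ (using that $x$ is connected to $y$, and handling the subcase $x\in\langle y\rangle$ by noting it forces $|x|=p$), whence uniqueness of the order-$p$ subgroup follows from cyclicity of $\langle x\rangle$. The only cosmetic difference is that the paper computes the exponent explicitly as $n=|x^p|$ and shows directly that every order-$p$ element lies in $\langle x^n\rangle$, whereas you phrase the uniqueness step as a short contradiction argument and then take $n=|x|/p$ (or $n=1$ in the edge case); these amount to the same thing.
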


\begin{proof}
  Let $|x^p| = k$ and $y\in M$ such that $|y| = p$ be given. Such a $y$ must exist because
    $p|\exp(M)$. Since $x$ and $y$ are connected in
    $\Gamma$, either $x$ is a power of $y$ or $y$ is a power of $x$. Suppose that $y^i = x$.
    Then $(p, i) = 1$ and there exists
    $j\in\mathbb{N}$ such that $x^j = (y^i)^j = y$. Thus every element of $Q$ of order $p$
    is a power of $x$.

  Further $1 = y^p = x^{jp} = (x^p)^j$ and so $k\mid j$. Thus $y = x^j = (x^k)^m$ for some
    $m\in \mathbb{N}$. So every element of order $p$ is contained in $\langle x^k\rangle$,
    a cyclic subgroup of order $p$.
\end{proof}

Thus if the power graph of a Moufang $p$-loop $M$ has a non-identity vertex connected to all
  others, then $M$ is either cyclic, generalized quaternion or generalized octonion by Theorem
  \ref{moufangDesc2}. We now handle the case that $M$ does not have prime power order.

\begin{lem}
  Suppose that $x\in M$ with $x\neq 1$ and $x$ is connected to all other vertices in the power
    graph of $M$ and $|M|$ is not a prime power. Then $M$ is a cyclic group.
\end{lem}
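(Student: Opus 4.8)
The plan is to reduce to the case of a cyclic group by showing that $M$ is in fact a group, and then invoking the known classification of groups whose power graph has a non-identity vertex joined to all others (such a group is either cyclic or generalized quaternion, and a generalized quaternion group is a $2$-group, so the non-prime-power hypothesis forces it to be cyclic). To get associativity, I would first apply Lemma \ref{connectedVertex}: for \emph{every} prime $p \mid \Exp(M)$ the loop $M$ has a unique subgroup of order $p$, and it is contained in $\langle x \rangle$. This already tells us that every Sylow-type subloop of $M$ (which exists by the $p$-loop machinery, applied inside appropriate subloops) has a unique subloop of order $p$, hence by Theorem \ref{moufangDesc2} is cyclic, generalized quaternion, or generalized octonion — and the latter two only occur at $p=2$.

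Next I would pin down the order of $x$. Since for each prime $p\mid\Exp(M)$ the unique subgroup of order $p$ lies in $\langle x\rangle$, and $\langle x\rangle$ is cyclic, $\Exp(M)$ and $|x|$ have the same set of prime divisors; I expect in fact to show $|x| = \Exp(M)$ by arguing that if some element $y$ had order not dividing $|x|$, then $y$ would generate (together with $x$) a subloop in which $\langle x\rangle$ is not the unique largest cyclic piece, contradicting that the unique subgroup of each prime order sits inside $\langle x\rangle$. More carefully: take any $y\in M$; since $x,y$ are adjacent, either $y\in\langle x\rangle$ or $x\in\langle y\rangle$. In the second case $\langle y\rangle$ is a cyclic group containing $x$, and its unique subgroup of prime order $p$ (for each $p\mid |y|$) must coincide with the unique one of $M$, forcing $p\mid|x|$ and, running the divisibility argument in the cyclic group $\langle y\rangle$, forcing $\langle y\rangle$ to have the same prime support as $\langle x\rangle$ with $\langle x\rangle\le\langle y\rangle$. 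Doing this for all $y$ and using that $|M|$ is not a prime power, I would show $x$ lies in a \emph{single} cyclic subgroup that contains every element whose order shares a prime with more than one other — and then leverage the non-prime-power hypothesis to rule out the generalized quaternion / generalized octonion alternatives entirely, since those have prime-power ($=2^k$) order. Concretely: for each prime $p$, the subloop $\langle$ all $p$-elements $\rangle$ has a unique subgroup of order $p$, so is cyclic, generalized quaternion, or generalized octonion; if it were one of the last two it would be a $2$-group and, being the full $2$-part, would have to equal $M$ up to the non-$2$ part — but then $M$ would fail to have a non-identity vertex adjacent to all others unless $M$ is that $2$-loop alone, contradicting non-prime-power order. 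Hence every such $p$-subloop is cyclic.

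With every prime-part subloop cyclic, I would assemble associativity: given $a,b,c\in M$, the subloop $\langle a,b\rangle$ is a group (Proposition \ref{gen-facts}), and I want $a\cdot bc = ab\cdot c$. Using that $x$ is adjacent to everything and that the prime parts are cyclic, I would show $a$ and $b$ lie in a common cyclic subgroup: write the order of each element as a product over primes, use the unique-subgroup-of-order-$p$ property to place the $p$-part of each element inside the cyclic $p$-subloop, and note all these cyclic $p$-subloops pairwise commute elementwise (each being contained in $\langle x\rangle$ in its prime order, and generated by power-associative elements whose joins are groups), so their product is a cyclic group containing both $a$ and $b$; then $a\cdot bc = ab\cdot c$ as in Lemma \ref{not2power}. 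By Moufang's Theorem $M$ is a group. Finally, a group with a non-identity vertex joined to all others in its power graph is cyclic or generalized quaternion, and generalized quaternion groups are $2$-groups, so the non-prime-power hypothesis leaves only the cyclic case.

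The main obstacle I anticipate is the middle step: cleanly ruling out the possibility that the $2$-part of $M$ is generalized quaternion or generalized octonion while the rest of $M$ is nontrivial. One must be careful because a non-identity vertex adjacent to all others in a direct-product-like situation is genuinely restrictive, and the loop case lacks a ready-made "$M \cong M_2 \times M_{2'}$" decomposition that one would freely use for groups; I would handle this by working with the unique prime-order subgroups and the adjacency of $x$ rather than any structural product decomposition, showing directly that the element realizing an order divisible by two distinct primes forces all the relevant $p$-subloops to be cyclic.
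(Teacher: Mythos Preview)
Your proposal is substantially more complicated than the paper's argument and, as written, has real gaps. The paper proceeds \emph{directly}: once you know (from Lemma~\ref{connectedVertex}) that $|x|$ is divisible by every prime dividing $\Exp(M)$, and hence by at least two distinct primes, you simply show that \emph{every} $y\in M$ lies in $\langle x\rangle$. For $y$ of prime-power order this is immediate from adjacency, since $x\in\langle y\rangle$ would force $|x|$ to be a prime power. For $y$ of mixed order, if $x=y^k$ then the prime-power components of $y$ are already powers of $x$, whence $|y|\mid |x|$; combined with $|x|=|y|/\gcd(k,|y|)$ this gives $\langle x\rangle=\langle y\rangle$. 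Thus $M=\langle x\rangle$ is cyclic. No Sylow theory, no classification of $p$-subloops, no commutativity arguments are needed.

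You actually state the key dichotomy (``either $y\in\langle x\rangle$ or $x\in\langle y\rangle$'') but then abandon it in favour of a structural detour. That detour has genuine problems. First, you invoke ``Sylow-type subloops'' and ``the $p$-loop machinery, applied inside appropriate subloops,'' but no such machinery is set up in the paper; Sylow theory for Moufang loops exists but is nontrivial and would need to be imported and justified. Second, your step~6 asserts that the cyclic $p$-subloops ``pairwise commute elementwise'' and that their product is cyclic; the parenthetical justification (``each being contained in $\langle x\rangle$ in its prime order'') does not establish elementwise commutation of the full subloops, and in a Moufang loop you cannot assume a product of pairwise-commuting cyclic subloops is even a subloop, let alone cyclic, without further argument. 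Third, you yourself flag the $p=2$ case (ruling out a generalized quaternion or generalized octonion $2$-part coexisting with a nontrivial odd part) as the main obstacle, and the sketch you give for it does not close the gap: you would need to show that the distinguished vertex $x$ cannot be adjacent to all elements of such a $2$-subloop while also carrying odd order, and this is essentially the direct argument in disguise.

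In short: the elementary route---prove $M=\langle x\rangle$ outright---is both available and what the paper does; your structural approach might be salvageable but would require substantially more input than the lemma warrants.
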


\begin{proof}
  Since $M$ is not a $p$-loop, $\Exp(M)$ is not a prime power \cite{64and81}. As in the proof of
    Lemma \ref{connectedVertex}, $|x|$ is divisible by every prime divisor of $\Exp(M)$. Since
    $\Exp(M)$ has at least two distinct prime factors so does $|x|$. 

  First let $z\in M$ such that $|z| = p^n$ for some prime $p$ be given. Note that either $x$ is
    a power of $z$ or vice versa. But all powers of $z$ have order a power of $p$ while $x$ has
    composite order. So $z$ must be a power of $x$, otherwise we would contradict that the order
    of $x$ is composite.

  Let $z\in M$ such that $|z|$ is not a prime power be given. If $z$ is a power of $x$ then
    we are done. We will show that $z$ must be a power of $x$.

  Suppose toward a contradiction that $z$ is not a power of $x$. Then $x = z^k$ for some $k$.
    Say $|z| = p_0^{i_0}\cdots p_m^{i_m}$, where this is a factorization into distinct primes
    and $m\geq 1$. Then $z^{p_1^{i_1}\cdots p_m^{i_m}}, \ldots, z^{p_0^{i_0}\cdots p_{m - 1}^{i_{m - 1}}}$
    are all powers of $x$ as elements of $M$ with prime power order.
    Thus $p_0^{i_0}, \ldots, p_m^{i_m} | |x|$ and $|z| | |x|$. 
    Then $|x| = |z^k| = \frac{|z|}{\gcd(k, |z|)}$ and $|x| = |z|$. But then
    $|\langle x\rangle| = |\langle z\rangle|$ while $\langle x\rangle \subsetneq \langle z\rangle$,
    a contradiction since $|x|$ is finite.

  Hence every element of $M$ is a power of $x$ and $M = \langle x\rangle$ is a cyclic group.
\end{proof}

We will now prove Theorem \ref{mainThm} in the case that a non-identity vertex is connected to all
  others in $\Gamma$.

\begin{proof}
  From Lemma \ref{connectedVertex} $M$ has a unique subloop of order $p$ for each prime divisor $p$
   of $|M|$. Thus by Theorem \ref{genOct} we have that $M$ is either a cyclic group, a generalized quaternion
    group or a generalized octonion loop. If $|M|$ is not a power of $2$, then $M$ is a cyclic group and there
    is nothing to prove. So suppose that $|M| = 2^n$. Let $K$ be the largest complete subgraph in $\Gamma$.
    We will split the proof into cases based on the size of $K$.

  First suppose that $|K| = 2^n$. Then $M$ is a cyclic group and thus its directed power graph is
    uniquely determined.

  Now suppose that $|K| = 2^{n - 1}$. Then $M$ is a generalized quaternion group and thus its power graph i
    uniquely determined.

  Finally, suppose that $|K| < 2^{n - 1}$. Then $M$ is generalized octonion and there exist
    $S_1\unlhd S_2\unlhd \ldots \unlhd S_k = M$ as in Theorem \ref{genOctChain}. Say $S_1 = Q_m$, the
    generalized quaternion group of order $m$. Choose a subgraph, $\Lambda$, of $\Gamma$ isomorphic to that of $Q_m$
    and apply arrows as in the case of $Q_m$. Now let $u\in M$ be a vertex not in $\Lambda$. By Theorem
    \ref{genOctChain} and Lemma \ref{u-order-4} we have that $u^2$ is the unique element of order $2$ and
    $u^3 = u^{-1}$ is distinct from $u$ and not contained in $\Lambda$.

  So each vertex outside $\Lambda$ is connected to the identity, the unique element of order $2$, and one other vertex
    which also lies outside $\Lambda$. Arrows are directed toward the identity and the unique element of order $2$ and
    are bidirectional between elements outside of $\Lambda$. Thus in this case the direction of each arrow is
    uniquely determined.
\end{proof}

The above takes care of the case that a nonidentity vertex is connected to all others in the power graph. As
    noted above Cameron's proof in \cite{PG} handles the case that the identity is the only vertex connected to
    all others in the power graph. Thus the proof of Theorem \ref{mainThm} is complete.


\section{Future directions of research}

The \textit{enhanced power graph} of a group, which lies between the power graph and the commuting graph as a subgraph, was recently defined in \cite{Zahirovic}. They were able to prove a similar result, that two finite groups with isomorphic power graphs must have isomorphic enhanced power graphs. One natural progression of our research would be to attempt to transfer this result to the context of Moufang loops.

\begin{dfn}
  Let $G$ be a graph, then the \emph{line graph of} $G$ has vertex set edges of $G$,
    with two vertices adjacent if and only if the corresponding edges are incident in $G$
\end{dfn}

\begin{dfn}
  A graph is a \emph{line graph} if it is the line graph of some graph
\end{dfn}

\begin{dfn}
  The \emph{proper power graph} is obtained from the power graph by deleting all vertices
    connected to all others
\end{dfn}

It is shown in \cite{LineGraph} that the proper power graphs of generalized quaternion groups are line graphs.
  We conjecture that this result can be extended to generalized octonion loops.


\bibliographystyle{unsrt}
\bibliography{current_version}

\end{document}